\numberwithin{equation}{section}
\theoremstyle{plain}
\newtheorem{theorem}{Theorem}[section]
\newtheorem{lemma}[theorem]{Lemma}
\newtheorem{corollary}[theorem]{Corollary}
\theoremstyle{definition}
\newcommand{\di}{\,\mathrm{d}}
\newcommand{\dive}{\mathrm{div}}
\newcommand{\N}{\mathbb{N}}
\newcommand{\R}{\mathbb{R}}
\newcommand{\redb}{\mathscr{F}}
\newcommand{\weakstarto}{\xrightharpoonup{\star}}
\DeclareMathOperator{\lip}{Lip}
\DeclareMathOperator{\supp}{supp}
\DeclareMathOperator{\tanset}{Tan}
\DeclarePairedDelimiter{\set}{\{}{\}}
\begin{document}

\title[On blow-ups of sets with finite fractional variation]{On blow-ups of sets with finite fractional variation}

\author[G.~Stefani]{Giorgio Stefani}
\address[G.~Stefani]{Università degli Studi di Padova, Dipartimento di
Matematica ``Tullio Levi-Civita'', Via Trieste 63, 35121 Padova (PD), Italy}
\email{giorgio.stefani@unipd.it}

\date{\today}

\dedicatory{To the memory of my beloved friend Diego.}

\keywords{Fractional gradient, fractional variation, tangent measure, blow-up, half-space}

\subjclass[2020]{Primary 49Q15. Secondary 26A33, 28A75, 49Q20}

\thanks{\textit{Acknowledgements}.
The author is member of the Istituto Nazionale di Alta Matematica (INdAM),
Gruppo Nazionale per l'Analisi Matematica, la Probabilità e le loro Applicazioni (GNAMPA), and has
received funding from INdAM under the INdAM--GNAMPA Project 2025 \textit{Metodi
variazionali per problemi dipendenti da operatori frazionari isotropi e
anisotropi} (grant agreement No.\ CUP\_E53\-240\-019\-500\-01), and from the European Union -- NextGenerationEU and the University of Padua under the 2023 STARS@UNIPD Starting Grant Project \textit{New Directions in Fractional
Calculus -- NewFrac} (grant agreement No.\ CUP\_C95\-F21\-009\-990\-001).
}

\begin{abstract}
Given $\alpha\in(0,1)$ and a set $E\subset\R^N$ with locally finite fractional $\alpha$-variation, we show that for $|D^\alpha\mathbf 1_E|$-a.e.\ $x$, every non-trivial tangent set of $E$ at $x$ with locally finite integer perimeter is a half-space oriented by the fractional inner unit normal of $E$ at $x$.
\end{abstract}

\maketitle

\section{Introduction}

\subsection{Setting}
Given $\alpha\in(0,1)$, the \emph{fractional $\alpha$-gradient} of $u\in\lip_c(\R^N)$ is defined as 
\begin{equation}
\label{eq:fracnabla}
\nabla^\alpha u(x)
=
c_{N,\alpha}
\int_{\R^N}\frac{(u(y)-u(x))\,(y-x)}{|y-x|^{N+\alpha+1}}\di y,
\quad
x\in\R^N,
\end{equation}
and the \emph{fractional $\alpha$-divergence} of $\varphi\in\lip_c(\R^N;\R^N)$ is analogously defined as
\begin{equation}
\label{eq:fracdiv}
\dive^\alpha\varphi(x)
=
c_{N,\alpha}
\int_{\R^N}\frac{(\varphi(y)-\varphi(x))\cdot(y-x)}{|y-x|^{N+\alpha+1}}\di y,
\quad
x\in\R^N,
\end{equation}
where $c_{N,\alpha}>0$ is a suitable normalization constant. 
The operators~\eqref{eq:fracnabla} and~\eqref{eq:fracdiv} satisfy the integration-by-parts formula 
\begin{equation}
\label{eq:ibp}
\int_{\R^N}u\,\dive^\alpha\varphi\di x
=
-\int_{\R^N}\varphi\cdot \nabla^\alpha u \di x.
\end{equation} 
For further details on the operators~\eqref{eq:fracnabla} and~\eqref{eq:fracdiv} and on the formula~\eqref{eq:ibp}, we refer the reader to~\cite{silhavy20}.
Building on the integration-by-parts formula~\eqref{eq:ibp}, in our previous works~\cites{brue-et-al22,comi-stefani19,comi-et-al22,comi-stefani22,comi-stefani23a,comi-stefani23b,comi-stefani24a,comi-stefani24b}, together with Giovanni E.~Comi and several collaborators, we developed a new theory of distributional fractional Sobolev and $BV$ spaces. 
The literature around the fractional gradient~\eqref{eq:fracnabla} is growing very rapidly in multiple directions, both theoretical and applicative. For a non-exhaustive list of contributions, we refer to~\cites{almi-et-al25,alicandro-et-al25,antil-et-al24,bellido-et-al20,bellido-et-al21,barrios-medina21,braides-et-al24,caponi-et-al25,carrero-et-al25,kreisbeck-schonberger22,liu-et-al24,liu-xiao22,lo-rodrigues23,schikorra-et-al17,schonberger24,shieh-spector15,shieh-spector18,spector19,spector20} and to the references therein.

\subsection{Fractional variation}

We continue the study of fractional $BV$ functions.
Given $p\in[1,\infty]$ and an open set $\Omega\subset\R^N$, we say that $u\in BV^{\alpha,p}_{\rm loc}(\Omega)$ if $u\in L^p(\R^N)$ and
\begin{equation*}
|D^\alpha u|(A)
=
\sup\set*{\int_{\R^N}u\,\dive^\alpha\varphi\di x : \varphi\in C^\infty_c(\R^N;\R^N),\ \|\varphi\|_{L^\infty}\le 1,\,\supp\varphi\subset A}<\infty
\end{equation*} 
for every open set $A\Subset\Omega$.
By Riesz's Representation Theorem, $u\in BV^{\alpha,p}_{\rm loc}(\Omega)$ if and only if $u\in L^p(\R^N)$ and there exists a locally finite (vector-valued) Radon measure $D^\alpha u\in\mathscr M_{\rm loc}(\Omega;\R^N)$ such that~\eqref{eq:ibp} holds with $D^\alpha u$ in place of the fractional $\alpha$-gradient for every $\varphi\in C^\infty_c(\R^N;\R^N)$ such that $\supp\varphi\subset\Omega$.
If $|D^\alpha u|(\Omega)<\infty$, then we write $u\in BV^{\alpha,p}(\Omega)$.
Note that the subscript `loc' in $BV^{\alpha,p}_{\rm loc}$ refers only to the local finiteness
of the \emph{fractional variation measure}, since $BV^{\alpha,p}_{\rm loc}$ functions are, by definition, in $L^p(\R^N)$.

\subsection{Blow-up Theorem}

If $\mathbf 1_E\in BV^{\alpha,\infty}_{\rm loc}(\R^N)$, then $|D^\alpha\mathbf 1_E|\in\mathscr M_{\rm loc}(\R^N)$ is the \emph{distributional fractional $\alpha$-perimeter measure} of $E\subset\R^N$.
In analogy with the classical theory~\cites{ambrosio-et-al00,maggi12}, 
the \emph{fractional reduced $\alpha$-boundary} $\redb^\alpha E$ of $E$ is the set of points 
$x\in\supp|D^\alpha\mathbf 1_E|$ at which the \emph{(measure-theoretic) inner unit fractional normal} exists, namely
\begin{equation*}
\nu^\alpha_E(x)
=
\lim_{r\to0^+}
\frac{D^\alpha\mathbf 1_E(B_r(x))}{|D^\alpha\mathbf 1_E|(B_r(x))}
\in\mathbb S^{N-1}.
\end{equation*}

The main feature of the fractional reduced boundary we are interested in here 
is its connection with the set of \emph{tangent} (or \emph{blow-up}) \emph{sets} of~$E$ at~$x$, 
that is, the set $\tanset(E,x)$ of all limit points of $\set*{\frac{E-x}{r}:r>0}$ in $L^1_{\rm loc}(\R^N)$ as $r\to0^+$.

\begin{theorem}
\label{res:fracblowup}
Let $\mathbf 1_E\in BV^{\alpha,\infty}_{\rm loc}(\R^N)$ and $x\in\redb^\alpha E$. Then, $\tanset(E,x)\ne\emptyset$ and any $F\in\tanset(E,x)$ is such that $\mathbf 1_F\in BV^{\alpha,\infty}_{\rm loc}(\R^N)$ with $\nu_F^\alpha=\nu_E^\alpha(x)$ for $|D^\alpha\mathbf 1_F|$-a.e.\ $y\in\redb^\alpha F$.
Moreover, assuming $x=0$ and $\nu^\alpha_E(0)=\mathrm e_N$ without loss of generality, $F=\R^{N-1}\times M$ for some measurable set $M\subset\R$ such that:
\begin{enumerate}[label=(\roman*),itemsep=1ex,topsep=1ex]

\item 
$\mathbf 1_M\in BV^{\alpha,\infty}_{\rm loc}(\R)$ with $\partial^\alpha\mathbf 1_M\ge0$;

\item
$|M|,|M^c|\in\set*{0,\infty}$;

\item
if $|M|=\infty$, then $\operatorname{ess}\sup M=\infty$;

\item
\label{item:finiteper}
if $M\notin\set{\emptyset,\R}$ and $P(M)<\infty$, then $M=(m,\infty)$ for some $m\in\R$.
\end{enumerate}  
\end{theorem}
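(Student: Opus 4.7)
The plan is to mimic De~Giorgi's classical blow-up argument for sets of finite perimeter, adapted to the nonlocal setting through the scaling identity for $\nabla^\alpha$ and the pointwise definition of $\nu^\alpha_E(x)$.

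\emph{Compactness and identification of the normal.} Writing $E_{x,r} := (E-x)/r$, the pointwise formula for $\nabla^\alpha$ yields the scaling identity $|D^\alpha\mathbf 1_{E_{x,r}}|(A) = r^{\alpha - N}\,|D^\alpha\mathbf 1_E|(x + rA)$ for every bounded Borel set $A \subset \R^N$. Combined with a differentiation argument for the Radon measure $|D^\alpha\mathbf 1_E|$ at the point $x$ (relying on density estimates for the fractional variation at $\redb^\alpha E$ points), one extracts along some sequence $r_n \downarrow 0$ a uniform local $BV^{\alpha,\infty}$ bound for $\mathbf 1_{E_{x,r_n}}$. Fractional $BV$-compactness in $L^1_{\rm loc}$ then produces a subsequential limit $F \in \tanset(E,x)$, with $\mathbf 1_F \in BV^{\alpha,\infty}_{\rm loc}(\R^N)$ by lower semicontinuity of the fractional total variation. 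Passing to the limit in the normalised vector ratio $D^\alpha\mathbf 1_{E_{x,r_n}}(B_R)/|D^\alpha\mathbf 1_{E_{x,r_n}}|(B_R)$ through the defining property of $\redb^\alpha E$, together with a Besicovitch-type differentiation over the radius $R$, then yields $\nu^\alpha_F \equiv \nu^\alpha_E(x)$ $|D^\alpha\mathbf 1_F|$-a.e.\ on $\redb^\alpha F$.

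\emph{Cylindrical structure.} Reducing to $x = 0$ and $\nu^\alpha_E(0) = \mathrm e_N$, the identity $D^\alpha\mathbf 1_F = \mathrm e_N\,|D^\alpha\mathbf 1_F|$ gives $(D^\alpha\mathbf 1_F)_i = 0$ distributionally for $i = 1,\dots,N-1$. Via the Fourier multiplier representation $\widehat{\partial_i^\alpha u}(\xi) = c_{N,\alpha}\,\mathrm i\,\xi_i|\xi|^{\alpha-1}\hat u(\xi)$, one deduces that $\widehat{\mathbf 1_F}$ is supported on the $\xi_N$-axis; boundedness of $\mathbf 1_F$ rules out derivatives of $\delta_0$ in the transverse variables, so $\mathbf 1_F(y) = \mathbf 1_M(y_N)$ for some measurable $M \subset \R$, i.e.\ $F = \R^{N-1}\times M$. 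Fubini, together with positivity of the $\mathrm e_N$-component of $D^\alpha\mathbf 1_F$, then delivers item~(i).

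\emph{Properties of $M$ and main obstacle.} All remaining items rest on the positivity $\partial^\alpha\mathbf 1_M \ge 0$. For (ii), if $0 < |M| < \infty$ then $\mathbf 1_M \in L^1(\R)$; testing $\partial^\alpha\mathbf 1_M$ against the cut-offs $\chi_R := \eta(\cdot/R)$ via integration by parts and exploiting the scaling $\partial^\alpha\chi_R = R^{-\alpha}(\partial^\alpha\eta)(\cdot/R)$ bounds the right-hand side by $O(R^{-\alpha}|M|) \to 0$; by monotone convergence and positivity $\partial^\alpha\mathbf 1_M = 0$, and then continuity of $\widehat{\mathbf 1_M}$ at $\xi = 0$ forces $\widehat{\mathbf 1_M}(0) = |M| = 0$, a contradiction; the case $0 < |M^c| < \infty$ is symmetric. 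For (iii), if $K := \operatorname{ess}\sup M < \infty$ and $|M|>0$, the pointwise formula gives $\partial^\alpha\mathbf 1_M(x) < 0$ at every $x > K$, contradicting the positivity of the a.c.\ density of $\partial^\alpha\mathbf 1_M$ on $(K,\infty)$. Finally, (iv) combines (ii)--(iii) with the classical 1D structure of finite-perimeter sets ($M$ is a finite disjoint union of open intervals) and an analogous pointwise sign analysis at an interior point of any candidate \emph{gap} between two bounded components, leaving $M = (m,\infty)$ as the only non-trivial possibility. I expect the technical heart of the proof to be the compactness step: the nonlocality of $D^\alpha$ makes obtaining equi-local $BV^{\alpha,\infty}$ bounds on the rescalings substantially more delicate than in the classical perimeter setting, and it rests on Radon--Nikodym density estimates for $|D^\alpha\mathbf 1_E|$ at points of $\redb^\alpha E$.
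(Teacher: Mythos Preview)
The paper does not prove this theorem: immediately after stating it, the authors record that the first part follows from \cite{comi-stefani19}*{Th.~5.8 and Prop.~5.9} and the second from \cite{comi-stefani24b}*{Th.~1.7}, and then move on. There is thus no in-paper argument to compare against; \cref{res:fracblowup} is quoted from those earlier works and used as input for the rest of the note.

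Your sketch is a reasonable outline of how those cited results are established. The scaling identity for $D^\alpha$, lower semicontinuity, and the defining limit at $x\in\redb^\alpha E$ are the right ingredients for the first part, and the sign computations for (ii)--(iv) are correct (in particular, at the right endpoint $c$ of any component of $M$ lying to the left of another component, the negative contribution to $\partial^\alpha\mathbf 1_M(x)$ blows up like $-(x-c)^{-\alpha}$ as $x\to c^+$, which forces the single-interval conclusion in~(iv)). Two points deserve a flag. First, the compactness step rests on the upper density bound $|D^\alpha\mathbf 1_E|(B_\rho(x))\le C\rho^{N-\alpha}$ for small $\rho$, which is \emph{not} part of the definition of $\redb^\alpha E$; this is a genuine theorem (precisely the content of \cite{comi-stefani19}*{Th.~5.8}), and you are right to single it out as the technical heart, but you should be aware that it is not just a ``differentiation argument''. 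Second, your Fourier route to the cylindrical structure needs care: $\widehat{\mathbf 1_F}$ is only a tempered distribution and the multiplier $\xi_i|\xi|^{\alpha-1}$ is singular at the origin, so the passage from $(D^\alpha\mathbf 1_F)_i=0$ for $i<N$ to $\supp\widehat{\mathbf 1_F}\subset\{\xi'=0\}$ requires justification; a duality/mollification argument, or the Riesz-potential factorisation of $\nabla^\alpha$, is the cleaner way to obtain independence from the first $N-1$ variables.
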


The first part of \cref{res:fracblowup} follows from~\cite{comi-stefani19}*{Th.~5.8 and Prop.~5.9}, 
while the second part is taken from~\cite{comi-stefani24b}*{Th.~1.7}.
\Cref{res:fracblowup} can be regarded as the fractional counterpart of De Giorgi's Blow-up Theorem~\cite{maggi12}*{Th.~15.5}, which states that 
if $\mathbf 1_E\in BV_{\rm loc}(\R^N)$ and $x\in\redb E$, the \emph{reduced boundary} of $E$, 
then $\tanset(E,x)=\set{H^+_{\nu_E(x)}(x)}$, where 
\begin{equation*}
H^+_{\nu_E(x)}(x)=\set*{y\in\R^N : (y-x)\cdot\nu_E(x)\ge 0},
\end{equation*}
and $\nu_E\colon\redb E\to\mathbb S^{N-1}$ is the (\emph{measure-theoretic}) \emph{inner unit normal} of $E$. 
In fact, by~\cite{comi-stefani24b}*{Th.~1.12(iii)}, if $\mathbf 1_E\in BV_{\rm loc}(\R^N)$, 
then $\redb E\subset\redb^\alpha E$ and $\nu_E^\alpha=\nu_E$ on~$\redb E$, 
showing that \cref{res:fracblowup} naturally extends the classical setting.
However, the fractional reduced boundary $\redb^\alpha E$ can be substantially larger than $\redb E$, 
even when $E$ is very regular; see~\cite{comi-stefani24b}*{Props.~1.13 and~1.15} for the cases of the half-space and the ball.

\subsection{Main result}

By combining  \cref{res:fracblowup}\ref{item:finiteper} with well-known stability properties 
of the family of tangent sets (see~\cite{leonardi00}*{Props.~2.1 and~2.2} for instance), 
we get that
\begin{equation}
\label{eq:gippo}
\text{$\R^{N-1}\times M\in\tanset(E,0)\setminus\set{\emptyset,\R^N}$ and $P(M)<\infty$}
\implies
H^+_{\mathrm e_N}(0)\in\tanset(E,0).
\end{equation}
This observation motivates the following question: 
does~\eqref{eq:gippo} still hold under the weaker assumption that $\mathbf 1_M\in BV_{\rm loc}(\R)$? 
The purpose of the present note is to provide an affirmative answer.
Here and below, we write $BV_{\rm loc}^\star(\R^N)=BV_{\rm loc}(\R^N)\setminus\set*{0,1}$ for brevity.

\begin{theorem}
\label{res:main}
Given $\mathbf 1_E\in BV^{\alpha,\infty}_{\rm loc}(\R^N)$, for 
$|D^\alpha\mathbf 1_E|$-a.e.\ $x\in\redb^\alpha E$ the following holds.
If $F\in\tanset(E,x)\cap BV_{\rm loc}^\star(\R^N)$, then $F = H_{\nu_E^\alpha(x)}^+(y)$ for some 
$y\in\R^N$. 
In particular,
\begin{equation}
\label{eq:main}
\tanset(E,x)\cap BV_{\rm loc}^\star(\R^N)\ne\emptyset
\implies
H_{\nu_E^\alpha(x)}^+(x)\in\tanset(E,x).
\end{equation}
\end{theorem}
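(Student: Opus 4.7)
The plan is to combine \cref{res:fracblowup} with a one-dimensional structural argument. We already know that any tangent set at a $|D^\alpha\mathbf 1_E|$-a.e.\ point has the form $F=\R^{N-1}\times M$ with $\partial^\alpha\mathbf 1_M\ge0$, and the additional information provided by the hypothesis $F\in BV_{\rm loc}^\star(\R^N)$ is simply that $M$ has locally finite classical perimeter and is non-trivial. The crux is therefore the 1D claim: if $M\subset\R$ is measurable with $\mathbf 1_M\in BV^{\alpha,\infty}_{\rm loc}(\R)\cap BV_{\rm loc}(\R)$, $\partial^\alpha\mathbf 1_M\ge0$, $\operatorname{ess}\sup M=\infty$, and $|M|=|M^c|=\infty$, then $M=(m,\infty)$ for some $m\in\R$. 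This strengthens \cref{res:fracblowup}\ref{item:finiteper}, which assumes the stronger $P(M)<\infty$ and therefore only has to deal with finitely many components of~$M$.

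First, I fix $x\in\redb^\alpha E$ where \cref{res:fracblowup} applies and pick $F\in\tanset(E,x)\cap BV_{\rm loc}^\star(\R^N)$. After translating and rotating so that $x=0$ and $\nu_E^\alpha(0)=\mathrm e_N$, one has $F=\R^{N-1}\times M$. A slicing argument (or direct Fubini) gives $\mathbf 1_M\in BV_{\rm loc}(\R)$, while non-triviality of $F$ together with items (ii) and (iii) of \cref{res:fracblowup} forces $|M|=|M^c|=\infty$ and $\operatorname{ess}\sup M=\infty$. In particular, up to a null set, $M=\bigcup_i(a_i,b_i)$ is a countable disjoint union of maximal open intervals whose endpoints do not accumulate in any bounded subset of~$\R$.

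Next comes the structural claim, which I would prove by contradiction. If $M\ne(m,\infty)$ for every $m\in\R$, then by maximality and $\operatorname{ess}\sup M=\infty$ there is a component with $b:=b_i<\infty$; by the non-accumulation property one can choose $\eta,\varepsilon>0$ with $(b-\eta,b)\subset M$ and $(b,b+2\varepsilon)\cap M=\emptyset$. I test $\partial^\alpha\mathbf 1_M$ against a non-negative bump $\varphi\in C^\infty_c((b,b+\varepsilon))$ concentrated at scale $\rho\ll\varepsilon$ near $b^+$: the integration-by-parts formula~\eqref{eq:ibp} gives $\int\varphi\di(\partial^\alpha\mathbf 1_M)=-\int_M\nabla^\alpha\varphi\di y$, and a direct computation shows that the contribution to $\int_M\nabla^\alpha\varphi$ from the adjacent left component $(b-\eta,b)\subset M$ has order $\rho^{1-\alpha}$ and is positive, while all other contributions (including the tail at $+\infty$) stay of order $\rho$. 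For $\rho$ small enough the near-singular left-contribution dominates, yielding $\int\varphi\di(\partial^\alpha\mathbf 1_M)<0$ and contradicting $\partial^\alpha\mathbf 1_M\ge0$.

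Finally, knowing $F=H^+_{\mathrm e_N}(m\mathrm e_N)$, i.e.\ $F=H^+_{\nu_E^\alpha(x)}(y)$ in the original coordinates for some $y\in\R^N$, the implication~\eqref{eq:main} follows from dilation invariance and $L^1_{\rm loc}$-closedness of $\tanset(E,x)$: for $\lambda>0$ the dilation $\lambda F=H^+_{\mathrm e_N}(\lambda m\mathrm e_N)$ is again a tangent set at~$x$, and letting $\lambda\to0^+$ gives $H^+_{\nu_E^\alpha(x)}(x)\in\tanset(E,x)$. The main obstacle lies in the structural step: one must quantify the near-singular positive contribution of $\nabla^\alpha\varphi$ on $(b-\eta,b)$ against the bounded contributions from all remaining components of~$M$ uniformly in~$\varphi$, dealing in particular with the tail accumulating at $+\infty$, which cannot be treated simply by compactness but requires the quantitative decay of $\nabla^\alpha\varphi$ at infinity.
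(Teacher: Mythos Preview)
Your approach is genuinely different from the paper's, and if the one-dimensional structural claim goes through it is strictly stronger. The paper does \emph{not} try to extend \cref{res:fracblowup}\ref{item:finiteper} to $\mathbf 1_M\in BV_{\rm loc}(\R)$; on the contrary, it explicitly records this as open and instead argues indirectly via Preiss's principle that tangents to tangents are tangents (\cref{res:preiss}, \cref{res:iterated}): for $|D^\alpha\mathbf 1_E|$-a.e.\ $x$, any $F\in\tanset(E,x)$ with $\mathbf 1_F\in BV_{\rm loc}(\R^N)$ has, by De Giorgi's classical blow-up theorem, a half-space tangent at every $y\in\redb F\subset\redb^\alpha F$; these half-spaces then lie in $\tanset^2(E,x)\subset\tanset(E,x)$, so by \cref{res:fracblowup} their fractional normal equals $\nu_E^\alpha(x)$, forcing $\nu_F\equiv\nu_E^\alpha(x)$ on $\redb F$ and hence (by the classical constancy lemma) $F$ a half-space. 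Your route bypasses Preiss entirely by proving the $1$D rigidity directly; the scaling heuristic you outline is correct---the adjacent left piece $(b-\eta,b)$ contributes $c\,\rho^{1-\alpha}$ to $\int_M\nabla^\alpha\varphi$ with the good sign, all further left pieces have the same sign, and the total contribution from components to the right of the gap is bounded by $C\|\varphi\|_\infty\,\rho\int_{b+2\varepsilon}^\infty(y-b-\rho)^{-1-\alpha}\di y=O(\rho/\varepsilon^\alpha)$, so the contradiction follows for small $\rho$. What this buys you is a pointwise statement at \emph{every} $x\in\redb^\alpha E$ (since \cref{res:fracblowup} is pointwise), whereas the paper's route inherits the a.e.\ exceptional set from Preiss's theorem; what the paper's route buys is that it needs no new computation at all, only classical De Giorgi plus soft measure-theoretic machinery.
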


Some comments are in order. 
First, as shown by the examples in~\cite{comi-stefani24b}*{Props.~1.13 and~1.15}, 
the implication~\eqref{eq:main} does not hold when all tangent sets are trivial, so the assumption that a non-trivial blow-up exists is necessary. 
Second, \cref{res:main} yields a rigidity property: for 
$|D^\alpha\mathbf 1_E|$-\emph{almost every} $x\in\redb^\alpha E$, every non-trivial tangent set with locally finite integer perimeter is a half-space oriented by $\nu_E^\alpha(x)$. 
This provides the closest available analogue of De Giorgi's Blow-up Theorem in the fractional setting. 
Third, we do not know whether this rigidity holds at \emph{every} point of $\redb^\alpha E$. 
In particular, extending property~\ref{item:finiteper} in \cref{res:fracblowup} 
to the broader case $\mathbf 1_M\in BV_{\rm loc}(\R)$ remains open.

\cref{res:main} was inspired by~\cite{ambrosio-et-al09}*{Th.~1.2}, where an analogous statement is established in \emph{Carnot groups}.  
The key ingredient is the principle, due to Preiss~\cites{mattila95,preiss87}, that \emph{tangent measures to tangent measures are tangent measures} (see \cref{res:preiss} below).  
This fact makes it possible to reduce the proof to the situation in which a tangent of $E$ has locally finite \emph{integer} perimeter, a case for which the conclusion follows from the classical theory.

A broader motivation for \cref{res:main} comes from the study of sets with locally finite 
distributional fractional perimeter. 
While De Giorgi's theorem describes the infinitesimal structure of sets of finite perimeter, the fractional case lacks an analogous rectifiability theory and even the appropriate infinitesimal models are not fully understood. 
In this direction, \cref{res:main,res:iterated} suggest that certain rigidity features of the classical theory may persist under suitable assumptions on the existence of non-trivial tangent sets.

\section{Preliminaries}

\subsection{Radon measure}

Given an open set $\Omega\subset\R^N$, we let $\mathscr M(\Omega)$ and $\mathscr M_{\rm loc}(\Omega)$ be the spaces of finite and locally finite signed Radon measures on~$\Omega$, respectively.

By the Riesz Representation Theorem, $\mathscr M_{\rm loc}(\Omega)$ is the dual of $C_c(\Omega)$, 
endowed with the topology of local uniform convergence. 
Accordingly, we say that a sequence $(\mu_k)_{k\in\N}\subset\mathscr M_{\rm loc}(\Omega)$ 
\emph{converges to} $\mu\in\mathscr M_{\rm loc}(\Omega)$ 
in the (\emph{local}) \emph{weak$\,^\star$ sense} if 
\begin{equation}
\label{eq:weakstar}
\lim_{k\to\infty}
\int_\Omega \varphi\,\mathrm{d}\mu_k
=
\int_\Omega \varphi\,\mathrm{d}\mu
\quad
\text{for every } 
\varphi\in C_c(\Omega).
\end{equation}
In this case, we write $\mu_k\weakstarto\mu$ 
in $\mathscr M_{\rm loc}(\Omega)$ as $k\to\infty$.

For vector-valued measures, we write 
$\mathscr M(\Omega;\R^m)$ and $\mathscr M_{\rm loc}(\Omega;\R^m)$, with $m\in\N$.
The notion of (local) weak$^\star$ convergence applies to $\R^m$-valued Radon measures by requiring~\eqref{eq:weakstar} to hold componentwise.
In this case, we write $\mu_k\weakstarto\mu$ 
in $\mathscr M_{\rm loc}(\Omega;\R^m)$ as $k\to\infty$.
Accordingly, the \emph{total variation} of $\mu\in\mathscr M_{\rm loc}(\Omega;\R^m)$ on an open set $A\subset\Omega$ is defined as
\begin{equation*}
|\mu|(A)
=\sup\set*{\int_\Omega \varphi\cdot\di\mu : 
\varphi\in C_c(\Omega;\R^m),\ 
\supp\varphi\subset A,\ 
\|\varphi\|_{L^\infty}\le 1}.
\end{equation*}
We recall that $|\mu|\in\mathscr M_{\rm loc}(\Omega)$ for every $\mu\in\mathscr M_{\rm loc}(\Omega;\R^m)$; 
see~\cite{maggi12}*{Lem.~4.17}. 
For a detailed presentation of the theory of Radon measures, we refer to~\cites{ambrosio-et-al00,falconer97,maggi12,mattila95}.

\subsection{Tangent measures}

Given $m\in\N$ and $\mu\in\mathscr M_{\rm loc}(\R^N;\R^m)$, for every $x\in\R^N$ and $r>0$ we define 
$\mu_{x,r}\in\mathscr M_{\rm loc}(\R^N;\R^m)$ by setting 
\begin{equation*}
\mu_{x,r}(A)=\mu(x+rA)
\quad 
\text{for every Borel set } A\subset\R^N.
\end{equation*}
As customary (see~\cite{preiss87}*{2.3(1)}, \cite{mattila95}*{Def.~14.1}, and~\cite{ambrosio-et-al00}*{Sec.~2.7}), 
we say that $\nu\in\tanset(\mu,x)$ if and only if there exist sequences 
$(r_k)_{k\in\N},(c_k)_{k\in\N}\subset(0,\infty)$ such that 
$r_k\to0^+$ and 
\begin{equation*}
c_k\,\mu_{x,r_k}\weakstarto\nu
\quad
\text{in $\mathscr M_{\rm loc}(\R^N;\R^m)$ as } 
k\to\infty.
\end{equation*} 
Moreover (see~\cite{falconer97}*{Sec.~9.1}), given $s\ge0$, we say that $\nu\in\tanset_s(\mu,x)$ if there exists an infinitesimal subsequence $(r_k)_{k\in\N}\subset(0,\infty)$ such that 
\begin{equation*}
r_k^{-s}\,\mu_{x,r_k}\to\nu
\quad
\text{in $\mathscr M_{\rm loc}(\R^N;\R^m)$ as } 
k\to\infty.
\end{equation*} 
By definition, it is clear that $\tanset_s(\mu,x)\subset\tanset(\mu,x)$ 
for every $s\ge0$, $\mu\in\mathscr M_{\rm loc}(\R^N;\R^m)$, and $x\in\R^N$.
For the proof of \cref{res:main}, we will rely on the following crucial result.

\begin{theorem}
\label{res:preiss}
Let $s\ge0$, $m\in\N$ and $\mu\in\mathscr M_{\rm loc}(\R^N;\R^m)$. 
Then, for $|\mu|$-a.e.\ $x\in\R^N$, every $\nu\in\tanset_s(\mu,x)$ satisfies the following properties:
\begin{enumerate}[label=(\roman*),itemsep=1ex,topsep=1ex]
\item 
\label{item:preisstan}
$\nu_{y,r}\in\tanset_s(\mu,x)$ for every $y\in\supp|\nu|$ and $r>0$;
\item
$\tanset_s(\nu,y)\subset\tanset_s(\mu,x)$ for every $y\in\supp|\nu|$.
\end{enumerate}
\end{theorem}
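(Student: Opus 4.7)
The theorem is a fractional-setting instance of Preiss's principle that ``tangent measures to tangent measures are tangent measures''; see~\cite{preiss87} and~\cite{mattila95}*{Ch.~14} for the classical formulation with unrestricted normalizing constants, and~\cite{falconer97}*{Sec.~9.1} for the $s$-homogeneous variant we need here. My strategy rests on three ingredients: a shift identity, a $|\mu|$-a.e.\ selection of good base points, and a diagonal argument.

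I would first record the following \emph{shift identity}. If $r_k^{-s}\mu_{x,r_k}\weakstarto\nu$, then for any $y\in\R^N$ and $r>0$ a direct test-function computation yields $(rr_k)^{-s}\mu_{x+r_ky,rr_k}\weakstarto r^{-s}\nu_{y,r}$, since $(rr_k)^{-s}\mu(x+r_ky+rr_kA)=r^{-s}r_k^{-s}\mu_{x,r_k}(y+rA)$ for every Borel set $A\subset\R^N$. The catch is that this is a blow-up at the \emph{moving} centers $x+r_ky$, not at~$x$, so $r^{-s}\nu_{y,r}$ is not yet seen to lie in $\tanset_s(\mu,x)$. To bridge the gap, I would invoke the classical selection argument of Preiss~\cite{preiss87} (see also~\cite{mattila95}*{Ch.~14}): after excluding a $|\mu|$-negligible set of bad points~$x$, one can extract a further subsequence along which $(rr_k)^{-s}\mu_{x,rr_k}$ itself converges weak$^\star$ to a measure differing from $r^{-s}\nu_{y,r}$ only by the translation by $y/r$ in rescaled coordinates. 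This step rests on a Fubini-type argument showing that, for $|\mu|$-a.e.~$x$, the local mass ratios $|\mu|(B_\rho(x+h))/|\mu|(B_\rho(x))$ remain controlled when $|h|=O(\rho)$. Running this construction over a countable dense set of pairs $(y,r)$ and passing to arbitrary $y\in\supp|\nu|$ and $r>0$ by continuity yields part~(i).

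Part~(ii) then follows from part~(i) by a diagonal argument. Given $\tilde\nu\in\tanset_s(\nu,y)$ witnessed by $t_\ell\to 0^+$ with $t_\ell^{-s}\nu_{y,t_\ell}\weakstarto\tilde\nu$, part~(i) provides, for each fixed~$\ell$, an infinitesimal sequence $(\sigma_k^{(\ell)})_k$ along which $(\sigma_k^{(\ell)})^{-s}\mu_{x,\sigma_k^{(\ell)}}$ weak$^\star$-approximates (a positive multiple of) $t_\ell^{-s}\nu_{y,t_\ell}$. Exploiting the metrizability of weak$^\star$ convergence on countably many compact subsets of $\R^N$, a Cantor diagonal extraction produces a single infinitesimal sequence $(\sigma_m)$ with $\sigma_m^{-s}\mu_{x,\sigma_m}\weakstarto\tilde\nu$, giving $\tilde\nu\in\tanset_s(\mu,x)$.

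The main obstacle I foresee is the shift step in~(i): proving that, at $|\mu|$-a.e.~$x$, the weak$^\star$ limit is insensitive to displacements of the blow-up center of order $r_k$. This is the core of Preiss's argument, and it is precisely the reason why the conclusion must hold only $|\mu|$-almost everywhere, rather than at every point of $\supp|\mu|$.
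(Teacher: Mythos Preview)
The paper does not actually prove this statement: it simply records that the argument follows almost \emph{verbatim} from \cite{falconer97}*{Prop.~9.3} (equivalently \cite{mattila95}*{Th.~14.16} and \cite{preiss87}), with the vector-valued adaptation of \cite{ambrosio-et-al09}*{Th.~6.4}, and omits all details. In that sense your proposal is aligned with the paper---you invoke the same sources, and your shift identity together with the diagonal extraction for part~(ii) are the correct ingredients of the classical scheme.

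Your summary of the bridge step in part~(i), however, is not how the Preiss argument runs, and as written it does not establish~(i). What you describe---passing to a subsequence so that $(rr_k)^{-s}\mu_{x,rr_k}$ converges to a measure differing from $r^{-s}\nu_{y,r}$ only by a translation---is in fact automatic for the full sequence (the limit is $r^{-s}\nu_{0,r}$, by continuity of $\lambda\mapsto r^{-s}\lambda_{0,r}$) and delivers only $r^{-s}\nu_{0,r}\in\tanset_s(\mu,x)$, not $\nu_{y,r}\in\tanset_s(\mu,x)$. Likewise, the ``Fubini-type mass-ratio control'' you mention is not the mechanism; no doubling is needed. The actual argument is a separability-plus-density-point contradiction: one metrizes local weak$^\star$ convergence, encodes the failure of~(i) by countably many ``bad'' sets, assumes one has positive $|\mu|$-measure, picks a $|\mu|$-density point $x$ of it, and then uses the moving-center shift identity at points $x_k$ of the bad set with $(x_k-x)/r_k\to y$ to produce a scale at which the defining inequality of the bad set is violated at $x_k$. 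This is where the $|\mu|$-a.e.\ restriction enters---you correctly flag that it must---but the reason is this density-point argument, not a comparison of mass ratios at displaced centers.
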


For $m=1$ and $\mu\ge0$, \cref{res:preiss} can be found in~\cite{falconer97}*{Prop.~9.3} and, with $\tanset$ in place of $\tanset_s$, 
in~\cite{preiss87}*{Th.~2.12} and~\cite{mattila95}*{Th.~14.16}. 
Moreover, \cref{res:preiss} corresponds to~\cite{ambrosio-et-al09}*{Th.~6.4}, 
stated on a Carnot group $\mathbb G$ in place of $\R^N$ and under the additional assumption 
that $\mu$ is \emph{asymptotically $s$-regular}; that is,
\begin{equation}
\label{eq:asympreg}
0<
\liminf_{r\to0^+}\frac{|\mu|(B_r(x))}{r^s}
\le 
\limsup_{r\to0^+}\frac{|\mu|(B_r(x))}{r^s}
<\infty
\quad
\text{for $|\mu|$-a.e.\ } x\in\mathbb G.
\end{equation}
Finally, \cref{res:preiss} also corresponds to~\cite{mattila05}*{Prop.~2.15}, 
with $m=1$, $\R^N$ replaced by a homogeneous locally compact metric group $\mathbf G$, 
and $\mu\ge0$ such that 
\begin{equation}
\label{eq:asymdoub}
\limsup_{r\to0^+}
\frac{\mu(B_{2r}(x))}{\mu(B_r(x))}<\infty
\quad
\text{for $\mu$-a.e.\ } x\in\mathbf G.
\end{equation}
The additional assumptions~\eqref{eq:asympreg} and~\eqref{eq:asymdoub} are required to ensure the validity of the Differentiation Theorem, which does not generally hold in an arbitrary metric space.

The proof of \cref{res:preiss} follows almost \emph{verbatim} the argument of~\cite{falconer97}*{Prop.~9.3} 
(which, in turn, is nearly identical to that of~\cite{mattila95}*{Th.~14.16}), 
up to the minor modifications needed to treat the vector-valued case, 
as in~\cite{ambrosio-et-al09}*{Th.~6.4}. 
We therefore omit the details.

\section{Proof of the main result}

\subsection{Properties of tangent sets}

From \cref{res:preiss}, we get the following result.

\begin{corollary}
\label{res:preissset}
Let $\mathbf 1_E\in BV^{\alpha,\infty}_{\rm loc}(\R^N)$. 
Then, for $|D^\alpha\mathbf 1_E|$-a.e.\ $x\in\redb^\alpha E$, every $F\in\tanset(E,x)\setminus\set{\emptyset,\R^N}$ satisfies the following properties:
\begin{enumerate}[label=(\roman*),itemsep=1ex,topsep=1ex]

\item 
$\frac{F-y}{r}\in \tanset(E,x)$ for every $y\in\supp|D^\alpha\mathbf 1_F|$ and $r>0$;

\item\label{item:preisssettanintan}
$\tanset(F,y)\subset\tanset(E,x)$ for every $y\in\supp|D^\alpha\mathbf 1_F|$.
\end{enumerate}
\end{corollary}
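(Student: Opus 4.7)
The plan is to apply Preiss's theorem (\cref{res:preiss}) to the vector-valued Radon measure $\mu := D^\alpha\mathbf 1_E\in\mathscr M_{\rm loc}(\R^N;\R^N)$ with scaling exponent $s := N-\alpha$, and then translate the conclusions from tangent measures back to tangent sets. The bridge is the elementary scaling identity
\begin{equation*}
D^\alpha\mathbf 1_{(E-x)/r}
=
r^{-s}\,(D^\alpha\mathbf 1_E)_{x,r}
\quad\text{for every }r>0,
\end{equation*}
which follows from a change of variables in~\eqref{eq:fracnabla}. Since $\redb^\alpha E$ has full $|\mu|$-measure by a Besicovitch-type differentiation argument, one may work with $x$ in the intersection of $\redb^\alpha E$ and the $|\mu|$-full-measure set supplied by \cref{res:preiss}.

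The heart of the proof is a \emph{dictionary} between tangent sets of $E$ at $x$ and $s$-tangent vector measures of $\mu$ at $x$. In one direction, (a) if $F\in\tanset(E,x)$ through $r_k\to 0^+$, then the $L^1_{\rm loc}$-convergence $\mathbf 1_{(E-x)/r_k}\to\mathbf 1_F$ paired with the integration-by-parts formula~\eqref{eq:ibp} tested against $\dive^\alpha\varphi$ (which is integrable on $\R^N$ thanks to its polynomial decay at infinity) yields $r_k^{-s}\mu_{x,r_k}\weakstarto D^\alpha\mathbf 1_F$, so $D^\alpha\mathbf 1_F\in\tanset_s(\mu,x)$. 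In the opposite direction, (b) if $D^\alpha\mathbf 1_H\in\tanset_s(\mu,x)$ for a non-trivial measurable $H$, then the uniform local variation bounds produced by weak$^\star$ convergence together with the $BV^\alpha$-compactness from~\cite{comi-stefani19} furnish a subsequential $L^1_{\rm loc}$-limit $G$ of $(E-x)/\sigma_k$ with $D^\alpha\mathbf 1_G=D^\alpha\mathbf 1_H$, and a rigidity argument (see below) forces $G=H$ a.e., giving $H\in\tanset(E,x)$.

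With the dictionary in hand, statement (i) follows by applying part (i) of \cref{res:preiss} to $\nu=D^\alpha\mathbf 1_F$ at $y$ with radius $1$: by translation invariance of $\nabla^\alpha$, one has $(D^\alpha\mathbf 1_F)_{y,1}=D^\alpha\mathbf 1_{F-y}\in\tanset_s(\mu,x)$, so direction (b) yields $F-y\in\tanset(E,x)$; the general $r>0$ case then follows from the elementary closure of $\tanset(E,x)$ under the dilations $G\mapsto G/r$. Statement (ii) is even more direct: for $H\in\tanset(F,y)$, direction (a) applied to $F$ at $y$ gives $D^\alpha\mathbf 1_H\in\tanset_s(D^\alpha\mathbf 1_F,y)$, which by part (ii) of \cref{res:preiss} sits inside $\tanset_s(\mu,x)$, and direction (b) concludes $H\in\tanset(E,x)$.

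The main obstacle and only non-routine input beyond \cref{res:preiss} is the rigidity within direction (b), namely the implication $D^\alpha\mathbf 1_G=D^\alpha\mathbf 1_H\Rightarrow G=H$ a.e.\ for non-trivial $G,H$. This rests on a Liouville-type property asserting that a bounded measurable function on $\R^N$ with vanishing distributional fractional gradient must be constant a.e.; once constancy of $\mathbf 1_G-\mathbf 1_H$ is secured, its range $\set{-1,0,1}$ combined with the non-triviality of both sets forces the constant to be zero. Such a Liouville-type rigidity is available from the fractional $BV$-theory of~\cite{comi-stefani19}.
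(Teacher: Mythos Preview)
Your proposal is correct and follows essentially the same route as the paper: apply \cref{res:preiss} to $\mu=D^\alpha\mathbf 1_E$ with $s=N-\alpha$, and pass between non-trivial tangent sets and non-zero $s$-tangent measures via the dictionary you call (a)/(b), which is precisely the content of the paper's \cref{res:taniffmeastan} (with the same Liouville-type rigidity input). One small difference worth noting: the paper derives (ii) from (i) together with the $L^1_{\rm loc}$-closedness of $\tanset(E,x)$ (citing~\cite{leonardi00}), rather than via part~(ii) of \cref{res:preiss}; this has the advantage of automatically covering the case $H\in\tanset(F,y)\cap\set{\emptyset,\R^N}$, which your route through direction (b) leaves open since (b) presupposes $H$ non-trivial.
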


In order to prove \cref{res:preissset}, we need the following preliminary result.

\begin{lemma}
\label{res:taniffmeastan}
If $\mathbf 1_E\in BV^{\alpha,\infty}_{\rm loc}(\R^N)$ and $x\in\redb^\alpha E$, then
\begin{equation*}
F\in\tanset(E,x)\setminus\set{\emptyset,\R^N}
\iff
D^\alpha\mathbf 1_F\in\tanset_{N-\alpha}(D^\alpha\mathbf 1_E,x)\setminus\set*{0}. 
\end{equation*}
\end{lemma}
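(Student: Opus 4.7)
The key tool is the scaling identity
\[
D^\alpha \mathbf{1}_{(E-x)/r} = r^{\alpha-N}\,(D^\alpha\mathbf{1}_E)_{x,r} \quad \text{in } \mathscr{M}_{\rm loc}(\R^N;\R^N),
\]
obtained by a change of variables in~\eqref{eq:fracdiv} and using the duality~\eqref{eq:ibp}. Setting $E_k:=(E-x)/r_k$, this identity translates the right-hand side of the equivalence into: there exists $r_k\to 0^+$ with $D^\alpha \mathbf{1}_{E_k} \weakstarto D^\alpha \mathbf{1}_F$ in $\mathscr{M}_{\rm loc}(\R^N;\R^N)$ and $D^\alpha\mathbf{1}_F\ne 0$, while the left-hand side amounts to $\mathbf{1}_{E_k}\to\mathbf{1}_F$ in $L^1_{\rm loc}$ along some $r_k\to 0^+$ with $F\ne\emptyset,\R^N$. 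The equivalence $F\in\set{\emptyset,\R^N}\iff D^\alpha\mathbf{1}_F=0$ is supplied by the kernel rigidity of $D^\alpha$ on $L^\infty$: $D^\alpha u=0$ with $u\in L^\infty$ forces $u$ to be a.e.\ constant, a fact available from~\cite{comi-stefani19}.

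For $(\Rightarrow)$, assume $\mathbf{1}_{E_k}\to\mathbf{1}_F$ in $L^1_{\rm loc}$ with $F\notin\set{\emptyset,\R^N}$. By \cref{res:fracblowup}, $\mathbf{1}_F\in BV^{\alpha,\infty}_{\rm loc}(\R^N)$. Testing any $\varphi\in C_c^\infty(\R^N;\R^N)$ via~\eqref{eq:ibp}, the pointwise decay $|\dive^\alpha\varphi(y)|\lesssim|y|^{-(N+\alpha)}$ at infinity ensures $\dive^\alpha\varphi\in L^1(\R^N)$; since $\|\mathbf{1}_{E_k}\|_\infty\le 1$, dominated convergence yields
\[
\int_{\R^N}\varphi\cdot\di D^\alpha\mathbf{1}_{E_k} = -\int_{\R^N}\mathbf{1}_{E_k}\dive^\alpha\varphi\di y \longrightarrow -\int_{\R^N}\mathbf{1}_F\dive^\alpha\varphi\di y = \int_{\R^N}\varphi\cdot\di D^\alpha\mathbf{1}_F,
\]
which is distributional convergence. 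Combined with the uniform local total variation bounds on $D^\alpha\mathbf{1}_{E_k}$ already embedded in the existence/compactness argument of~\cite{comi-stefani19}*{Th.~5.8 and Prop.~5.9} underlying the first part of \cref{res:fracblowup}, this upgrades to weak$^\star$ convergence in $\mathscr{M}_{\rm loc}$.

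For $(\Leftarrow)$, from $D^\alpha\mathbf{1}_{E_k}\weakstarto D^\alpha\mathbf{1}_F\ne 0$ in $\mathscr{M}_{\rm loc}$ one infers $\sup_k|D^\alpha\mathbf{1}_{E_k}|(B_R)<\infty$ for every $R>0$, while $\|\mathbf{1}_{E_k}\|_\infty\le 1$. The $L^1_{\rm loc}$-compactness of $BV^{\alpha,\infty}_{\rm loc}$-bounded sequences from~\cite{comi-stefani19} allows one to pass to a subsequence with $\mathbf{1}_{E_k}\to\mathbf{1}_G$ in $L^1_{\rm loc}$ for some measurable $G$ (which lies in $BV^{\alpha,\infty}_{\rm loc}$ by lower semicontinuity of the total variation). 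Rerunning the forward computation gives $D^\alpha\mathbf{1}_G=D^\alpha\mathbf{1}_F$, so $\mathbf{1}_G-\mathbf{1}_F$ is a $\set{-1,0,1}$-valued $L^\infty$ function in the kernel of $D^\alpha$, hence a.e.\ constant by the rigidity noted above. Since $F\notin\set{\emptyset,\R^N}$, this constant must equal $0$, so $G=F$ a.e., yielding $F\in\tanset(E,x)\setminus\set{\emptyset,\R^N}$.

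The main delicate points are: (i) ensuring weak$^\star$ (not merely distributional) convergence of the fractional gradients in the forward direction, which requires the uniform total variation bounds from~\cite{comi-stefani19} rather than just integration by parts; and (ii) the $BV^{\alpha,\infty}_{\rm loc}$-compactness needed in the reverse direction to extract an $L^1_{\rm loc}$-convergent subsequence of sets from weak$^\star$ convergence of their fractional gradients. The remaining input, the rigidity of the kernel of $D^\alpha$ on $L^\infty$, is what lets us identify tangent limits uniquely across the correspondence.
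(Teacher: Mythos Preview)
Your proof is correct and follows essentially the same approach as the paper's: both directions hinge on the scaling identity $D^\alpha\mathbf 1_{(E-x)/r}=r^{\alpha-N}(D^\alpha\mathbf 1_E)_{x,r}$, the continuity of $D^\alpha$ under $L^1_{\rm loc}$ convergence of uniformly bounded indicators, the $L^1_{\rm loc}$-compactness of blow-ups, and the kernel rigidity of $D^\alpha$ on $L^\infty$. The paper packages the forward weak$^\star$ convergence and the reverse compactness by citing~\cite{comi-stefani24b}*{Th.~1.6(i)} and \cref{res:fracblowup} directly, while you unfold these into their constituent ingredients (integration by parts, dominated convergence, uniform total variation bounds from~\cite{comi-stefani19}); and the paper phrases the constant-identification step as ``$F=G$ or $F=\R^N\setminus G$'' whereas you argue that a nonzero constant forces $F$ trivial---but these are cosmetic differences.
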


\begin{proof}
If $F\in\tanset(E,x)\setminus\set{\emptyset,\R^N}$, then $\mathbf 1_F\in BV^{\alpha,\infty}_{\rm loc}(\R^N)$ by \cref{res:fracblowup}.
Moreover, we can find an infinitesimal sequence $(r_k)_{k\in\N}\subset(0,\infty)$ such that 
$\frac{E-x}{r_k}\to F$ in $L^1_{\rm loc}(\R^N)$ as $k\to\infty$. 
Therefore, by the scaling properties of the fractional $\alpha$-gradient~\eqref{eq:fracnabla} 
(see also~\cite{comi-stefani19}*{Eq.~(4.8)}) and~\cite{comi-stefani24b}*{Th.~1.6(i)}, we obtain
\begin{equation*}
r_k^{\alpha-N}(D^\alpha\mathbf 1_E)_{x,r_k}
=
D^\alpha\mathbf 1_{\frac{E-x}{r_k}}
\weakstarto
D^\alpha\mathbf 1_F
\quad
\text{in $\mathscr M_{\rm loc}(\R^N;\R^N)$ as $k\to\infty$},
\end{equation*} 
showing that $D^\alpha\mathbf 1_F\in\tanset_{N-\alpha}(D^\alpha\mathbf 1_E,x)$.
In addition, we must have $D^\alpha\mathbf 1_F\ne0$, since otherwise $\mathbf 1_F$ would be constant 
by~\cite{comi-stefani24b}*{Prop.~1.8}, and thus $F\in\set{\emptyset,\R^N}$, a contradiction. 

Conversely, if $D^\alpha\mathbf 1_F\in\tanset_{N-\alpha}(D^\alpha\mathbf 1_E,x)\setminus\set*{0}$, 
then clearly $\mathbf 1_F\in BV^{\alpha,\infty}_{\rm loc}(\R^N)$, and we can find an infinitesimal sequence 
$(r_k)_{k\in\N}\subset(0,\infty)$ such that
\begin{equation*}
r_k^{\alpha-N}(D^\alpha\mathbf 1_E)_{x,r_k}
\weakstarto
D^\alpha\mathbf 1_F
\quad
\text{in $\mathscr M_{\rm loc}(\R^N;\R^N)$ as $k\to\infty$}.
\end{equation*}
Thus, letting $E_k=(E-x)/r_k$ for every $k\in\N$, the sequence $(E_k)_{k\in\N}$ satisfies 
$\mathbf 1_{E_k}\in BV^{\alpha,\infty}_{\rm loc}(\R^N)$ for every $k\in\N$, and
\begin{equation*}
D^\alpha\mathbf 1_{E_k}
=
r_k^{\alpha-N}(D^\alpha\mathbf 1_E)_{x,r_k}
\weakstarto
D^\alpha\mathbf 1_F
\quad
\text{in $\mathscr M_{\rm loc}(\R^N;\R^N)$ as $k\to\infty$}.
\end{equation*}
Since $x\in\redb^\alpha E$, possibly passing to a subsequence (which we do not relabel), 
by \cref{res:fracblowup} there exists $G\in\tanset(E,x)$ such that 
$\mathbf 1_G\in BV^{\alpha,\infty}_{\rm loc}(\R^N)$ and $E_k\to G$ in $L^1_{\rm loc}(\R^N)$ as $k\to\infty$. 
Again by~\cite{comi-stefani24b}*{Th.~1.6(i)}, possibly passing to a further subsequence (which we do not relabel), we also have that 
\begin{equation*}
D^\alpha\mathbf 1_{E_k}\weakstarto D^\alpha\mathbf 1_G
\quad
\text{in $\mathscr M_{\rm loc}(\R^N;\R^N)$ as $k\to\infty$},
\end{equation*} 
from which it follows that $D^\alpha\mathbf 1_G=D^\alpha\mathbf 1_F$.
By~\cite{comi-stefani24b}*{Prop.~1.8}, this implies that $\mathbf 1_G-\mathbf 1_F$ is constant, 
so that either $F=G$ or $F=\R^N\setminus G$.
The latter possibility is ruled out, since otherwise 
$D^\alpha\mathbf 1_F=-D^\alpha\mathbf 1_G$ and thus $D^\alpha\mathbf 1_F=0$, a contradiction. 
Hence $F=G$, and therefore $F\in\tanset(E,x)$.
Moreover, $F\notin\set{\emptyset,\R^N}$, since  $D^\alpha\mathbf 1_F\ne0$.
\end{proof}

\begin{proof}[Proof of \cref{res:preissset}]
If $F\in\tanset(E,x)\setminus\set{\emptyset,\R^N}$, then by \cref{res:taniffmeastan} we deduce that 
$D^\alpha\mathbf 1_F\in\tanset_{N-\alpha}(D^\alpha\mathbf 1_E,x)\setminus\set{0}$.
Therefore, by \cref{res:preiss}\ref{item:preisstan}, we get that 
$(D^\alpha\mathbf 1_F)_{y,r}\in\tanset_{N-\alpha}(D^\alpha\mathbf 1_E,x)$ 
for every $y\in\supp|D^\alpha\mathbf 1_F|$ and every $r>0$.
Since $(D^\alpha\mathbf 1_F)_{y,r}=D^\alpha\mathbf 1_{(F-y)/r}$ by the scaling properties of~\eqref{eq:fracnabla}, 
again by \cref{res:taniffmeastan} this implies that $(F-y)/r\in\tanset(E,x)$ 
for every $y\in\supp|D^\alpha\mathbf 1_F|$ and $r>0$. 
Finally, since $\tanset(E,x)$ is closed with respect to convergence in $L^1_{\rm loc}(\R^N)$ 
(see~\cite{leonardi00}*{Prop.~2.2}), we also obtain that 
$\tanset(F,y)\subset\tanset(E,x)$ for every $y\in\supp|D^\alpha\mathbf 1_F|$, 
concluding the proof.    
\end{proof}

\subsection{Iterated tangent sets}

For the proof of \cref{res:main}, we rely on the notion of \emph{iterated} tangent sets.
Precisely, given $\mathbf 1_E\in BV^{\alpha,\infty}_{\rm loc}(\R^N)$ and $x\in\redb^\alpha E$, 
we define $\tanset^1(E,x)=\tanset(E,x)$ and 
\begin{equation}
\label{eq:defktan}
\tanset^{k+1}(E,x)
=
\bigcup
\set*{\tanset(F) : F\in\tanset^k(E,x)}
\end{equation}
for all $k\in\N$, where
\begin{equation*}
\tanset(E)
=
\bigcup\set*{\tanset(E,x):x\in\redb^\alpha E}.
\end{equation*}
The proof of \cref{res:main} is based on the following result, 
which may be of independent interest (and rephrases~\cite{ambrosio-et-al09}*{Th.~6.1} in the present setting).  

\begin{theorem}
\label{res:iterated}
If $\mathbf 1_E\in BV^{\alpha,\infty}_{\rm loc}(\R^N)$, then for $|D^\alpha\mathbf 1_E|$-a.e.\ $x\in\redb^\alpha E$ it holds that
\begin{equation*}
\bigcup_{k=2}^\infty
\tanset^k(E,x)
\subset
\tanset(E,x).
\end{equation*}
\end{theorem}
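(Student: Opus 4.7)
The plan is to derive \cref{res:iterated} directly from \cref{res:preissset}(ii) by a clean induction on $k$, after unpacking the definition of $\tanset(F)$ in~\eqref{eq:defktan}. The only substantive content is already in the set-valued Preiss-type result~\cref{res:preissset}; beyond that, the argument amounts to a careful bookkeeping of which points one is taking tangents at.

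First, I would fix an $x\in\redb^\alpha E$ in the full-measure set on which \cref{res:preissset} applies. The base case is $k=2$: by definition,
\begin{equation*}
\tanset^2(E,x)
=
\bigcup_{F\in\tanset(E,x)}\tanset(F)
=
\bigcup_{F\in\tanset(E,x)}\bigcup_{y\in\redb^\alpha F}\tanset(F,y).
\end{equation*}
For the two trivial choices $F=\emptyset$ and $F=\R^N$ we have $D^\alpha\mathbf 1_F=0$, hence $\redb^\alpha F=\emptyset$, so these contribute nothing. For every remaining $F\in\tanset(E,x)\setminus\set{\emptyset,\R^N}$ we know from \cref{res:fracblowup} that $\mathbf 1_F\in BV^{\alpha,\infty}_{\rm loc}(\R^N)$, so $\redb^\alpha F\subset\supp|D^\alpha\mathbf 1_F|$, and \cref{res:preissset}\ref{item:preisssettanintan} gives $\tanset(F,y)\subset\tanset(E,x)$ for each such $y$. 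Combining these two observations yields $\tanset^2(E,x)\subset\tanset(E,x)$.

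The induction step is then essentially automatic: assuming $\tanset^k(E,x)\subset\tanset(E,x)$, I would simply write
\begin{equation*}
\tanset^{k+1}(E,x)
=
\bigcup_{F\in\tanset^k(E,x)}\tanset(F)
\subset
\bigcup_{F\in\tanset(E,x)}\tanset(F)
=
\tanset^2(E,x)
\subset
\tanset(E,x),
\end{equation*}
and conclude by induction on $k$.

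The only point that might require a moment of care — rather than a genuine obstacle — is that \cref{res:preissset} provides a full-measure set on $\redb^\alpha E$ but the iterated definition runs through points $y\in\redb^\alpha F$ for sets $F$ that vary with $k$. However, the inclusion in \cref{res:preissset}\ref{item:preisssettanintan} is phrased as a property of the base point $x$, not of $y$, so no additional negligible-set arguments are needed at subsequent stages: once $x$ is good, the same $x$ works simultaneously for all $k$ and all intermediate $F$'s. Thus the full result reduces to a finite bookkeeping on top of the Preiss-type statement, with no new analytic input beyond what is already available.
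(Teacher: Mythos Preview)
Your argument is correct and follows essentially the same route as the paper: establish $\tanset^2(E,x)\subset\tanset(E,x)$ from \cref{res:preissset}\ref{item:preisssettanintan} (handling the trivial $F\in\{\emptyset,\R^N\}$ separately via $\redb^\alpha F=\emptyset$), and then iterate. Your observation that $\redb^\alpha F\subset\supp|D^\alpha\mathbf 1_F|$ is needed to match the hypothesis of \cref{res:preissset}, and your remark that the good set of $x$'s is fixed once and for all, are both accurate and make the bookkeeping slightly more explicit than in the paper.
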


\begin{proof}
Let $x\in\redb^\alpha E$ be such that \cref{res:preissset}\ref{item:preisssettanintan} holds and let $F\in\tanset(E,x)$.
If $F\in\set{\emptyset,\R^N}$, then $\redb^\alpha F=\emptyset$ and there is nothing to prove.
If instead $F\notin\set{\emptyset,\R^N}$, then for every $y\in\redb^\alpha F$ we have that $\tanset(F,y)\subset\tanset(E,x)$.
By the definition in~\eqref{eq:defktan}, we infer that $\tanset^2(E,x)\subset\tanset^1(E,x)$,
and the conclusion follows by iteration. 
\end{proof}

\subsection{Proof of \texorpdfstring{\cref{res:main}}{Theorem 1.2}}
We can now prove our main result.

\begin{proof}[Proof of \cref{res:main}]
Let $x\in\redb^\alpha E$ be such that \cref{res:iterated} holds.
By assumption, there exists $F\in\tanset(E,x)\setminus\set{\emptyset,\R^N}$ such that $\mathbf 1_F\in BV_{\rm loc}(\R^N)$.
Hence, we have that $\tanset(F,y)=\set{H_{\nu_F(y)}^+(y)}$ for every $y\in\redb F$.
Since $\redb F\subset\redb^\alpha F$ by~\cite{comi-stefani24b}*{Th.~1.12(iii)}, 
from the definition in~\eqref{eq:defktan} we deduce that $H_{\nu_F(y)}^+(y)\in\tanset^2(E,x)$ for every $y\in\redb F$. 
This, in turn, by \cref{res:iterated}, implies that $H_{\nu_F(y)}^+(y)\in\tanset(E,x)$ for every $y\in\redb F$.
Combining~\cite{comi-stefani24b}*{Prop.~1.13} with \cref{res:fracblowup}, we then obtain that, for every $y\in\redb F$,
\begin{equation*}
\nu_F(y)=\nu^\alpha_{H^+_{\nu_F(y)}(y)}(z)=\nu^\alpha_E(x)
\quad
\text{for a.e.}\ z\in\R^N.
\end{equation*}
As a consequence, $\nu_F(y)=\nu^\alpha_E(x)$ for every $y\in\redb F$, 
which implies that $F=H^+_{\nu^\alpha_E(x)}(x_0)$ for some $x_0\in\R^N$ 
by~\cite{maggi12}*{Prop.~15.15}, and~\eqref{eq:main} follows by~\cite{leonardi00}*{Props.~2.1 and~2.2}. 
\end{proof} 


\begin{bibdiv}
\begin{biblist}

\bib{alicandro-et-al25}{article}{
   author={Alicandro, Roberto},
   author={Braides, Andrea},
   author={Solci, Margherita},
   author={Stefani, Giorgio},
   title={Topological singularities arising from fractional-gradient
   energies},
   journal={Math. Ann.},
   volume={393},
   date={2025},
   number={1},
   pages={71--111},
   issn={0025-5831},
   review={\MR{4966555}},
   doi={10.1007/s00208-025-03230-6},
}

\bib{almi-et-al25}{article}{
   author={Almi, Stefano},
   author={Caponi, Maicol},
   author={Friedrich, Manuel},
   author={Solombrino, Francesco},
   title={A fractional approach to strain-gradient plasticity: beyond
   core-radius of discrete dislocations},
   journal={Math. Ann.},
   volume={391},
   date={2025},
   number={3},
   pages={4063--4115},
   issn={0025-5831},
   review={\MR{4865236}},
   doi={10.1007/s00208-024-03020-6},
}

\bib{ambrosio-et-al00}{book}{
   author={Ambrosio, Luigi},
   author={Fusco, Nicola},
   author={Pallara, Diego},
   title={Functions of bounded variation and free discontinuity problems},
   series={Oxford Mathematical Monographs},
   publisher={The Clarendon Press, Oxford University Press, New York},
   date={2000},
   pages={xviii+434},
   isbn={0-19-850245-1},
   review={\MR{1857292}},
}

\bib{ambrosio-et-al09}{article}{
   author={Ambrosio, Luigi},
   author={Kleiner, Bruce},
   author={Le Donne, Enrico},
   title={Rectifiability of sets of finite perimeter in Carnot groups:
   existence of a tangent hyperplane},
   journal={J. Geom. Anal.},
   volume={19},
   date={2009},
   number={3},
   pages={509--540},
   issn={1050-6926},
   review={\MR{2496564}},
   doi={10.1007/s12220-009-9068-9},
}

\bib{antil-et-al24}{article}{
   author={Antil, Harbir},
   author={D\'iaz, Hugo},
   author={Jing, Tian},
   author={Schikorra, Armin},
   title={Nonlocal bounded variations with applications},
   journal={SIAM J. Math. Anal.},
   volume={56},
   date={2024},
   number={2},
   pages={1903--1935},
   issn={0036-1410},
   review={\MR{4712400}},
   doi={10.1137/22M1520876},
}

\bib{barrios-medina21}{article}{
   author={Barrios, Bego\~na},
   author={Medina, Maria},
   title={Equivalence of weak and viscosity solutions in fractional
   non-homogeneous problems},
   journal={Math. Ann.},
   volume={381},
   date={2021},
   number={3-4},
   pages={1979--2012},
   issn={0025-5831},
   review={\MR{4333435}},
   doi={10.1007/s00208-020-02119-w},
}

\bib{bellido-et-al20}{article}{
   author={Bellido, José C.},
   author={Cueto, Javier},
   author={Mora-Corral, Carlos},
   title={Fractional Piola identity and polyconvexity in fractional spaces},
   journal={Ann. Inst. H. Poincar\'e{} C Anal. Non Lin\'eaire},
   volume={37},
   date={2020},
   number={4},
   pages={955--981},
   issn={0294-1449},
   review={\MR{4104831}},
   doi={10.1016/j.anihpc.2020.02.006},
}

\bib{bellido-et-al21}{article}{
   author={Bellido, José C.},
   author={Cueto, Javier},
   author={Mora-Corral, Carlos},
   title={$\Gamma $-convergence of polyconvex functionals involving
   $s$-fractional gradients to their local counterparts},
   journal={Calc. Var. Partial Differential Equations},
   volume={60},
   date={2021},
   number={1},
   pages={Paper No. 7, 29},
   issn={0944-2669},
   review={\MR{4179861}},
   doi={10.1007/s00526-020-01868-5},
}

\bib{braides-et-al24}{article}{
   author={Braides, Andrea},
   author={Causin, Andrea},
   author={Solci, Margherita},
   title={Discrete approximation of nonlocal-gradient energies},
   journal={Adv. Calc. Var.},
   volume={17},
   date={2024},
   number={4},
   pages={1507--1518},
   issn={1864-8258},
   review={\MR{4803735}},
   doi={10.1515/acv-2023-0028},
}

\bib{brue-et-al22}{article}{
   author={Bruè, Elia},
   author={Calzi, Mattia},
   author={Comi, Giovanni E.},
   author={Stefani, Giorgio},
   title={A distributional approach to fractional Sobolev spaces and
   fractional variation: asymptotics II},
   journal={C. R. Math. Acad. Sci. Paris},
   volume={360},
   date={2022},
   pages={589--626},
   issn={1631-073X},
   review={\MR{4449863}},
   doi={10.5802/crmath.300},
}

\bib{caponi-et-al25}{article}{
   author={Caponi, Maicol},
   author={Carbotti, Alessandro},
   author={Maione, Alberto},
   title={$H$-compactness for nonlocal linear operators in fractional
   divergence form},
   journal={Calc. Var. Partial Differential Equations},
   volume={64},
   date={2025},
   number={9},
   pages={Paper No. 290},
   issn={0944-2669},
   review={\MR{4970259}},
   doi={10.1007/s00526-025-03139-7},
}

\bib{carrero-et-al25}{article}{
   author={Carrero, Lisbeth},
   author={Quaas, Alexander},
   author={Zuniga, Andres},
   title={Existence of solutions to a quasilinear nonlocal PDE},
   journal={Calc. Var. Partial Differential Equations},
   volume={64},
   date={2025},
   number={7},
   pages={Paper No. 224, 18},
   issn={0944-2669},
   review={\MR{4948147}},
   doi={10.1007/s00526-025-03080-9},
}

\bib{comi-stefani19}{article}{
   author={Comi, Giovanni E.},
   author={Stefani, Giorgio},
   title={A distributional approach to fractional Sobolev spaces and
   fractional variation: existence of blow-up},
   journal={J. Funct. Anal.},
   volume={277},
   date={2019},
   number={10},
   pages={3373--3435},
   issn={0022-1236},
   review={\MR{4001075}},
   doi={10.1016/j.jfa.2019.03.011},
}

\bib{comi-et-al22}{article}{
   author={Comi, Giovanni E.},
   author={Spector, Daniel},
   author={Stefani, Giorgio},
   title={The fractional variation and the precise representative of
   $BV^{\alpha,p}$ functions},
   journal={Fract. Calc. Appl. Anal.},
   volume={25},
   date={2022},
   number={2},
   pages={520--558},
   issn={1311-0454},
   review={\MR{4437291}},
   doi={10.1007/s13540-022-00036-0},
}

\bib{comi-stefani22}{article}{
   author={Comi, Giovanni E.},
   author={Stefani, Giorgio},
   title={Leibniz rules and Gauss-Green formulas in distributional
   fractional spaces},
   journal={J. Math. Anal. Appl.},
   volume={514},
   date={2022},
   number={2},
   pages={Paper No. 126312, 41},
   issn={0022-247X},
   review={\MR{4422400}},
   doi={10.1016/j.jmaa.2022.126312},
}

\bib{comi-stefani23a}{article}{
   author={Comi, Giovanni E.},
   author={Stefani, Giorgio},
   title={Failure of the local chain rule for the fractional variation},
   journal={Port. Math.},
   volume={80},
   date={2023},
   number={1-2},
   pages={1--25},
   issn={0032-5155},
   review={\MR{4578331}},
   doi={10.4171/pm/2096},
}

\bib{comi-stefani23b}{article}{
   author={Comi, Giovanni E.},
   author={Stefani, Giorgio},
   title={A distributional approach to fractional Sobolev spaces and
   fractional variation: asymptotics I},
   journal={Rev. Mat. Complut.},
   volume={36},
   date={2023},
   number={2},
   pages={491--569},
   issn={1139-1138},
   review={\MR{4581759}},
   doi={10.1007/s13163-022-00429-y},
}

\bib{comi-stefani24a}{article}{
   author={Comi, Giovanni E.},
   author={Stefani, Giorgio},
   title={Fractional divergence-measure fields, Leibniz rule and Gauss-Green
   formula},
   journal={Boll. Unione Mat. Ital.},
   volume={17},
   date={2024},
   number={2},
   pages={259--281},
   issn={1972-6724},
   review={\MR{4752315}},
   doi={10.1007/s40574-023-00370-y},
}

\bib{comi-stefani24b}{article}{
   author={Comi, Giovanni E.},
   author={Stefani, Giorgio},
   title={On sets with finite distributional fractional perimeter},
   conference={
      title={Anisotropic isoperimetric problems and related topics},
   },
   book={
      series={Springer INdAM Ser.},
      volume={62},
      publisher={Springer, Singapore},
   },
   isbn={978-981-97-6983-4},
   isbn={978-981-97-6984-1},
   date={[2024] \copyright 2024},
   pages={127--150},
   review={\MR{4866983}},
   doi={10.1007/978-981-97-6984-1\_6},
}

\bib{falconer97}{book}{
   author={Falconer, Kenneth},
   title={Techniques in fractal geometry},
   publisher={John Wiley \& Sons, Ltd., Chichester},
   date={1997},
   pages={xviii+256},
   isbn={0-471-95724-0},
   review={\MR{1449135}},
}

\bib{kreisbeck-schonberger22}{article}{
   author={Kreisbeck, Carolin},
   author={Sch\"onberger, Hidde},
   title={Quasiconvexity in the fractional calculus of variations:
   characterization of lower semicontinuity and relaxation},
   journal={Nonlinear Anal.},
   volume={215},
   date={2022},
   pages={Paper No. 112625, 26},
   issn={0362-546X},
   review={\MR{4330183}},
   doi={10.1016/j.na.2021.112625},
}

\bib{leonardi00}{article}{
   author={Leonardi, Gian Paolo},
   title={Blow-up of oriented boundaries},
   journal={Rend. Sem. Mat. Univ. Padova},
   volume={103},
   date={2000},
   pages={211--232},
   issn={0041-8994},
   review={\MR{1789540}},
}

\bib{liu-et-al24}{article}{
   author={Liu, Liguang},
   author={Sun, Yuhua},
   author={Xiao, Jie},
   title={Quasilinear Laplace equations and inequalities with fractional
   orders},
   journal={Math. Ann.},
   volume={388},
   date={2024},
   number={1},
   pages={1--60},
   issn={0025-5831},
   review={\MR{4693928}},
   doi={10.1007/s00208-022-02510-9},
}

\bib{liu-xiao22}{article}{
   author={Liu, Liguang},
   author={Xiao, Jie},
   title={Divergence \& curl with fractional order},
   language={English, with English and French summaries},
   journal={J. Math. Pures Appl. (9)},
   volume={165},
   date={2022},
   pages={190--231},
   issn={0021-7824},
   review={\MR{4470113}},
   doi={10.1016/j.matpur.2022.07.008},
}

\bib{lo-rodrigues23}{article}{
   author={Lo, Catharine W. K.},
   author={Rodrigues, José Francisco},
   title={On a class of nonlocal obstacle type problems related to the
   distributional Riesz fractional derivative},
   journal={Port. Math.},
   volume={80},
   date={2023},
   number={1-2},
   pages={157--205},
   issn={0032-5155},
   review={\MR{4578337}},
   doi={10.4171/pm/2100},
}

\bib{maggi12}{book}{
   author={Maggi, Francesco},
   title={Sets of finite perimeter and geometric variational problems},
   series={Cambridge Studies in Advanced Mathematics},
   volume={135},
   publisher={Cambridge University Press, Cambridge},
   date={2012},
   pages={xx+454},
   isbn={978-1-107-02103-7},
   review={\MR{2976521}},
   doi={10.1017/CBO9781139108133},
}

\bib{mattila95}{book}{
   author={Mattila, Pertti},
   title={Geometry of sets and measures in Euclidean spaces},
   series={Cambridge Studies in Advanced Mathematics},
   volume={44},
   publisher={Cambridge University Press, Cambridge},
   date={1995},
   pages={xii+343},
   isbn={0-521-46576-1},
   isbn={0-521-65595-1},
   review={\MR{1333890}},
   doi={10.1017/CBO9780511623813},
}

\bib{mattila05}{article}{
   author={Mattila, Pertti},
   title={Measures with unique tangent measures in metric groups},
   journal={Math. Scand.},
   volume={97},
   date={2005},
   number={2},
   pages={298--308},
   issn={0025-5521},
   review={\MR{2191708}},
   doi={10.7146/math.scand.a-14977},
}

\bib{preiss87}{article}{
   author={Preiss, David},
   title={Geometry of measures in ${\bf R}^n$: distribution, rectifiability,
   and densities},
   journal={Ann. of Math. (2)},
   volume={125},
   date={1987},
   number={3},
   pages={537--643},
   issn={0003-486X},
   review={\MR{0890162}},
   doi={10.2307/1971410},
}

\bib{schikorra-et-al17}{article}{
   author={Schikorra, Armin},
   author={Spector, Daniel},
   author={Van Schaftingen, Jean},
   title={An $L^1$-type estimate for Riesz potentials},
   journal={Rev. Mat. Iberoam.},
   volume={33},
   date={2017},
   number={1},
   pages={291--303},
   issn={0213-2230},
   review={\MR{3615452}},
   doi={10.4171/RMI/937},
}

\bib{schonberger24}{article}{
   author={Sch\"onberger, Hidde},
   title={Extending linear growth functionals to functions of bounded
   fractional variation},
   journal={Proc. Roy. Soc. Edinburgh Sect. A},
   volume={154},
   date={2024},
   number={1},
   pages={304--327},
   issn={0308-2105},
   review={\MR{4691572}},
   doi={10.1017/prm.2023.14},
}

\bib{shieh-spector15}{article}{
   author={Shieh, Tien-Tsan},
   author={Spector, Daniel E.},
   title={On a new class of fractional partial differential equations},
   journal={Adv. Calc. Var.},
   volume={8},
   date={2015},
   number={4},
   pages={321--336},
   issn={1864-8258},
   review={\MR{3403430}},
   doi={10.1515/acv-2014-0009},
}

\bib{shieh-spector18}{article}{
   author={Shieh, Tien-Tsan},
   author={Spector, Daniel E.},
   title={On a new class of fractional partial differential equations II},
   journal={Adv. Calc. Var.},
   volume={11},
   date={2018},
   number={3},
   pages={289--307},
   issn={1864-8258},
   review={\MR{3819528}},
   doi={10.1515/acv-2016-0056},
}

\bib{silhavy20}{article}{
   author={\v{S}ilhav\'y, M.},
   title={Fractional vector analysis based on invariance requirements
   (critique of coordinate approaches)},
   journal={Contin. Mech. Thermodyn.},
   volume={32},
   date={2020},
   number={1},
   pages={207--228},
   issn={0935-1175},
   review={\MR{4048032}},
   doi={10.1007/s00161-019-00797-9},
}

\bib{spector19}{article}{
   author={Spector, Daniel},
   title={A noninequality for the fractional gradient},
   journal={Port. Math.},
   volume={76},
   date={2019},
   number={2},
   pages={153--168},
   issn={0032-5155},
   review={\MR{4065096}},
   doi={10.4171/pm/2031},
}

\bib{spector20}{article}{
   author={Spector, Daniel},
   title={An optimal Sobolev embedding for $L^1$},
   journal={J. Funct. Anal.},
   volume={279},
   date={2020},
   number={3},
   pages={108559, 26},
   issn={0022-1236},
   review={\MR{4093790}},
   doi={10.1016/j.jfa.2020.108559},
}

\end{biblist}
\end{bibdiv}

\end{document}